\documentclass[a4paper,11pt]{article}
\usepackage[english]{babel}
\usepackage[T1]{fontenc}
\usepackage[latin1]{inputenc}
\usepackage{amsmath,amssymb,mathrsfs}
\usepackage{soul}
\usepackage{xypic}
\usepackage{amsthm}
\usepackage[left=2cm,right=2cm,top=3cm,bottom=2cm]{geometry}
\theoremstyle{plain}
\newtheorem{thm}{Theorem}[section]
\newtheorem{prop}[thm]{Proposition}
\usepackage{hyperref}

\theoremstyle{definition}
\newtheorem{defi}[thm]{Definition}
\newtheorem{rem}[thm]{Remark}

\newtheorem{lem}[thm]{Lemma}

\title{Application of equivalence method to Monge-Ampère equations \textit{Elliptic case}}
\author{Imsatfia Moheddine\footnote{Institute Mathematics of Jussieu- Paris, UMR 7586,
Bâtiment Sophie Germain, Office 751, Case 7012 - 75205  Paris Cedex 13, France.}}
\usepackage{hyperref}
\date{}

\begin{document}

\maketitle

\begin{abstract}
The application of equivalence method to classify Monge-Ampère system leads to three orbits, parabolic case, hyperbolic case and elliptic case wich correspond to three types of Monge-Ampère systems.  In this paper we will study the elliptic case and give a presentation of the group as a complex group.
\end{abstract}

{\large\textbf{Keywords:}} Exterior Differential Systems, Equivalence problem, Monge-Ampère equations.
 
 \begin{center}
\begin{Large}
\section{Introduction}
\end{Large}
\end{center}

Cartan's method to state the equivalence problem devolopped by Elie Cartan in the years 1905-1910 more recently in \cite{Neut}, is a crucial tool. We are here interested in its application to the study of Monge-Ampère equation in 2 variables. Hence following the work of R. Bryant, D. Grossman and P. Griffiths in the years 1997-1998 \cite{BRP} in order to clarify the strategy of Cartan: Given a 5-dimensional contact manifold $(\mathcal{M},I)$ they applied this method to
some Monge-Ampère equations given in \cite{Motimoto79} $\varepsilon=\{\theta,d\theta,\Psi\}$ for $\theta\in\Gamma(I)$ and $\Psi$ is a 2-form. We may assume $d\theta\wedge\Psi=0$ mod $\{I\}$. On the
contact manifold $\mathcal{M}$, one can locally find a coframing $\eta=(\eta^a)$ such that $\eta^0\in\Gamma(I)$ and
\begin{equation}\label{equa1}
 d\eta^0=\eta^1\wedge\eta^2+\eta^3+\eta^4 \text{mod }\{I\},
\end{equation}
then we can write $\Psi=\frac{1}{2}b_{\imath\jmath}\eta^\imath\wedge\eta^\jmath$. We can find that there are three types of Monge-Ampère systems: 
\begin{enumerate}
 \item If $\Psi\wedge\Psi$ is a negative multiple of $d\eta^0\wedge d\eta^0$, then the local coframing $\eta$ my be chosen so that in addition to (\ref{equa1}),
\[
 \Psi=\eta^1\wedge\eta^2-\eta^3+\eta^4 \text{mod }\{I\};
\]
for a classical problem, this occurs when the Euler-Lagrange PDE is hyperbolic.
\item If $\Psi\wedge\Psi=0$, then the local coframing $\eta$ my be chosen so that in addition to (\ref{equa1}),
\[
 \Psi=\eta^1\wedge\eta^3 \text{mod }\{I\};
\]
for a classical problem, this occurs when the Euler-Lagrange PDE is parabolic.
\item If $\Psi\wedge\Psi$ is a positive multiple of $d\eta^0\wedge d\eta^0$, then the local coframing $\eta$ my be chosen so that in addition to (\ref{equa1}),
\[
 \Psi=\eta^1\wedge\eta^4-\eta^3+\eta^2 \text{mod }\{I\},
\]
for a classical problem, this occurs when the Euler-Lagrange PDE is elliptic.
\end{enumerate}

 R. Bryant, D. Grossman and P. Griffiths applied the equivalence method \cite{Olver} to study the hyperbolic case. Our aim here is to study the elliptic case in which we apply the equivalence method, using a presentation of the acting group as a complex group. We determined the case when an elliptic Monge-Ampère system is locally equivalent to the Monge-Ampère system for the linear homogeneous Laplace equations and the case when it is locally equivalent to an Euler-Lagrange system.

\section{Monge-Ampère System}

Denote by $\mathcal{J}^1(\mathbb{R}^2,\mathbb{R})$ be the first order jet espace $$\mathcal{J}^1(\mathbb{R}^2,\mathbb{R}):= \{(x^1,x^2,z,p_1,p_2)\in\mathbb{R}^2\times
\mathbb{R}\times\mathbb{R}^2\}.$$ 
For all smooth function $u:\mathbb{R}^2\rightarrow\mathbb{R}$, we associate the graph of $u$ by $\Sigma:= j^1u(\mathbb{R}^2)\subset\mathcal{M}$ with 
\[
 j^1u:\mathbb{R}^2\rightarrow\mathcal{J}^1(\mathbb{R}^2,\mathbb{R}).
\]
We have $z\circ(\mathcal{J}^1(u))=u$ and for $1\leq a\leq 2$, $p_a\circ(\mathcal {J}^1(u))=\frac{\partial u} {\partial x^a}$, then
\[
 \Sigma:=\left\{\left(x^1,x^2,u(x),\frac{\partial u}{\partial x^1},\frac{\partial u}{\partial x^2}\right), x\in\mathbb{R}^2\right\},
\]
is a smooth submanifold of $\mathcal{J}^1(\mathbb{R}^2,\mathbb{R})$. We define the one-form $\theta$ which is not closed by
\[
 \theta=dz-p_1dx^1-p_2dx^2.
\]
If $\Psi$ is a two form over $\mathcal{J}^1(\mathbb{R}^2,\mathbb{R})$, then
\[
 \Psi=\Psi_{p_1p_2}dp_1\wedge dp_2+\Psi_{p_1x^2}dp_1\wedge dx^2+\Psi_{p_2x^2}dp_2\wedge dx^2+\Psi_{p_1x^1}dp_1\wedge dx^1
\]
\[
+\Psi_{p_2x^1}dp_2\wedge dx^1+\Psi_{x^1x^2}dx^1\wedge dx^2\ \ \ \text{mod}(\theta).\ \ \ \ \ \ \ \ \ \ \ \ \ \ \ \ \ \ \ \ \ \ \ \ \ \ \
\]
A Monge-Ampère equation reads
\[
 \left\{
\begin{array}{c}
 \Psi\vert_\Sigma=0,\ \ \ \ \ \ \ \ \ \ \ \ \ \ \ \ \ \ \ \\
\theta\vert_\Sigma=0\ \ (\Rightarrow d\theta\vert_\Sigma=0),\\
dx^1\wedge dx^2\vert_\Sigma\neq0,\ \ \ \ \ \ \ \ \
\end{array}
\right.
\]
Along of $\Sigma$, we have
\[
 \Psi_{p_1p_2}\left[\frac{\partial ^2u}{(\partial x^1)^2}\frac{\partial ^2u}{(\partial x^2)^2}-\left(\frac{\partial ^2u}{\partial x^1\partial x^2}\right)^2\right]+\Psi_{p_1x^2} \frac{\partial ^2u}{(\partial x^1)^2}+\Psi_{p_2x^1}\frac{\partial ^2u}{(\partial x^2)^2}
\]
\[
+(\Psi_{p_1x^1}+\Psi_{p_2x^2})\frac{\partial ^2u}{\partial x^1\partial x^2}+\Psi_{x^1x^2}\left(x^a,u(x),\frac{\partial u}{\partial x^a}\right)=0.
\]
Without loss of generality we can normalize by assuming $\Psi$, we have
\[
 \Psi_{p_1x^1}=\Psi_{p_2x^2}\Leftrightarrow d\theta\wedge \Psi=0\ \ \text{mod}(\theta),
\]
Hence the data of Mong-Ampère equation are :
\[
 \left\{
\begin{array}{c}
 \theta\in\Omega^1(\mathcal{M})\text{ and } \Psi\in\Omega^2(\mathcal{M}),\\
\Psi\wedge d\theta=0\text{ mod}(\theta),\ \ \ \ \ \ \ \ \ \ \ \\
\theta\wedge d\theta\wedge d\theta\neq0,\ \ \ \ \ \ \ \ \ \ \ \ \ \ \
\end{array}
\right.
\]

\begin{lem}\label{lem2}

 Let $\varLambda=L(x,z,p)dx$ be a 1-form, for $x\in\mathbb{R}^n$, $z=u(x)$ and $p=(p_\imath)=\frac{\partial u}{\partial x^\imath}$. Over $\mathcal{J}^1(\mathbb{R}^n,\mathbb{R})$; we introduce the contact form, $\theta=dz-p_\imath dx^\imath$. If $\Sigma\subset\mathcal{J}^1 (\mathbb{R}^n,\mathbb{R})$ define by $\Sigma=j^1u(\Omega)=\{(x,u(x),p);\ \ x\in\Omega\subset\mathbb{R}^n\}$ with,
\[
\left\{ 
 \begin{array}{c}
  dx\vert_{\Sigma}\neq 0,\\
\theta\vert_{\Sigma}\neq 0,
 \end{array}
\right.
\]
 Then there exists a unique form $\Xi$ and a 1-form $\alpha$ as
\[
 d\varLambda=\theta\wedge\Xi+d\alpha,
\]
\end{lem}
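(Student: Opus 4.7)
The plan is to exhibit an explicit decomposition by performing a ``fibre integration by parts'' on the terms of $d\varLambda$ that contain $dp_\imath$, and then to extract uniqueness from the constraint that the correction $\alpha$ be a multiple of the contact form $\theta$.

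First, I would expand $d\varLambda$ in the canonical coordinates $(x^\imath,z,p_\imath)$ on $\mathcal{J}^1(\mathbb{R}^n,\mathbb{R})$. Writing $dx=dx^1\wedge\cdots\wedge dx^n$, the formula $\varLambda=L\,dx$ gives
\[
d\varLambda = L_z\,dz\wedge dx + \sum_\imath L_{p_\imath}\,dp_\imath\wedge dx.
\]
The first summand is already in the desired shape: substituting $dz=\theta+\sum_\imath p_\imath\,dx^\imath$ and using $dx^\imath\wedge dx=0$ yields $dz\wedge dx=\theta\wedge dx$, so $L_z\,dz\wedge dx=L_z\,\theta\wedge dx$.

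The heart of the argument is the treatment of the $dp_\imath$-terms. I would introduce the $(n-1)$-forms $\omega^\imath=(-1)^{\imath-1}dx^1\wedge\cdots\widehat{dx^\imath}\cdots\wedge dx^n$, chosen so that $dx^\jmath\wedge\omega^\imath=\delta^\jmath_\imath\,dx$. Combined with $d\theta=-\sum_\jmath dp_\jmath\wedge dx^\jmath$, this gives the key identity $d(\theta\wedge\omega^\imath)=d\theta\wedge\omega^\imath=-dp_\imath\wedge dx$. Applying Leibniz to $L_{p_\imath}\,\theta\wedge\omega^\imath$ then produces
\[
L_{p_\imath}\,dp_\imath\wedge dx = -d\bigl(L_{p_\imath}\,\theta\wedge\omega^\imath\bigr) - \theta\wedge\bigl(dL_{p_\imath}\wedge\omega^\imath\bigr),
\]
and summing over $\imath$ delivers the decomposition
\[
d\varLambda = \theta\wedge\Bigl(L_z\,dx-\sum_\imath dL_{p_\imath}\wedge\omega^\imath\Bigr) + d\Bigl(-\sum_\imath L_{p_\imath}\,\theta\wedge\omega^\imath\Bigr),
\]
so $\Xi=L_z\,dx-\sum_\imath dL_{p_\imath}\wedge\omega^\imath$ and $\alpha=-\sum_\imath L_{p_\imath}\,\theta\wedge\omega^\imath$.

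The main obstacle is uniqueness. The $\alpha$ constructed above manifestly lies in the Pfaffian ideal generated by $\theta$, and this is the natural normalisation I would impose to enforce uniqueness. Given a second admissible pair $(\Xi',\alpha')$ with $\alpha'\in\theta\cdot\Omega^\bullet$, write $\alpha'-\alpha=\theta\wedge\nu$; then $d(\alpha'-\alpha)=\theta\wedge(\Xi-\Xi')$ expands via $d(\theta\wedge\nu)=d\theta\wedge\nu-\theta\wedge d\nu$ to the requirement $d\theta\wedge\nu\equiv 0\pmod{\theta}$. Because the contact hypothesis $\theta\wedge(d\theta)^n\neq 0$ makes $d\theta$ symplectic on $\ker\theta$, this fibrewise non-degeneracy forces $\nu=0$, whence $\alpha'=\alpha$ and $\Xi'\equiv\Xi\pmod\theta$. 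This last linear-algebra step in the contact fibre is where any subtlety will appear; everything preceding it is routine symbolic manipulation.
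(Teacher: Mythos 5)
The paper states this lemma without giving any proof at all (it is lifted from Bryant--Griffiths--Grossman, where the decomposition $d\Lambda=\theta\wedge\Xi+d\alpha$ is obtained exactly as you do), so there is nothing in the source to compare line by line; judged on its own, your argument is correct and is the standard one. The computation of $d\varLambda=L_z\,\theta\wedge dx+\sum_\imath L_{p_\imath}\,dp_\imath\wedge dx$, the identity $d(\theta\wedge\omega^\imath)=-dp_\imath\wedge dx$, and the resulting explicit pair $\Xi=L_z\,dx-\sum_\imath dL_{p_\imath}\wedge\omega^\imath$, $\alpha=-\sum_\imath L_{p_\imath}\,\theta\wedge\omega^\imath$ all check out. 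You are also right that the uniqueness claim is vacuous as literally stated (one may add any closed form to $\alpha$) and that the correct normalisation is $\alpha\in(\theta)$; note in passing that for general $n$ your $\alpha$ is an $n$-form, not a $1$-form as the lemma says, which is forced by degree count and consistent with the remark following the lemma. Two small precisions in the uniqueness step: the Lefschetz-type injectivity of $\wedge\,d\theta$ on $\Lambda^{n-1}(\ker\theta)$ only yields $\nu\equiv0\bmod\theta$, hence $\theta\wedge\nu=0$ and $\alpha'=\alpha$ --- it does not literally force $\nu=0$; and correspondingly $\Xi$ is determined only modulo $\theta$, i.e.\ what is genuinely unique is the Poincar\'e--Cartan form $\theta\wedge\Xi$. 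With those two caveats stated, your proof is complete and supplies exactly what the paper omits.
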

\begin{rem}
With some additional conditions, theses variationals problems become a Euler-Lagrange equation of type Monge-Ampère. 
\begin{enumerate}
 \item The lemma \ref{lem2} is true for any form $\Lambda\in\Omega^n(\mathcal{M})$.
 \item Along $\Sigma$, Euler-Lagrange equations of the action $\int_{\Sigma}\Lambda$, where $\theta\vert_{\Sigma}= 0$, are given by:
\[
 \frac{\partial L}{\partial z}dx-\frac{d}{dx^\imath}\left(\frac{\partial L}{\partial p_\imath} \right)=0.
\]
\end{enumerate}
\end{rem}
\begin{defi}
 The unique form $\Pi:=\theta\wedge\Psi$ is called \textit{Poincaré-Cartan form}.
\end{defi}
\begin{thm}\label{thm1}

 A Monge-Ampère system $(\mathcal{M},\varepsilon=\{\theta,d\theta,\Psi\})$ is locally equivalent to an Euler-Lagrange system if and only if
\[
 d\Pi:=d(\theta\wedge\Psi)=\varphi\wedge\Pi,
\]
with $d\varphi\equiv0$ mod$\{\theta,d\theta,\Psi\}$.
\end{thm}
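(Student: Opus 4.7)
The plan is to exploit the gauge freedom inherent in the presentation of a Monge-Ampère system. Replacing $\theta$ by $\lambda\theta$ and $\Psi$ by $\mu\Psi+\theta\wedge\beta$ transforms the Poincaré-Cartan form by $\Pi\mapsto f\Pi$ with $f=\lambda\mu$, and correspondingly transforms any $\varphi$ satisfying $d\Pi=\varphi\wedge\Pi$ into $\varphi+d\log f$. I will first verify that the class of $d\varphi$ modulo the ideal $\{\theta,d\theta,\Psi\}$ is well-defined, since replacing $\varphi$ by $\varphi+d\log f+c\theta$ changes $d\varphi$ by $dc\wedge\theta+c\,d\theta$, which already lies in the ideal. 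Both hypotheses of the theorem are therefore invariants of the Monge-Ampère system.

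For the necessity, assume the system is locally equivalent to an Euler-Lagrange system with Lagrangian $\Lambda$. By Lemma~\ref{lem2}, $d\Lambda=\theta\wedge\Xi+d\alpha$, hence $d(\Lambda-\alpha)=\theta\wedge\Xi$. In this representative the Poincaré-Cartan form is exact and $\varphi=0$ works. For the original representative $\Pi=\theta\wedge\Psi=f\,\theta\wedge\Xi$ related by a scalar factor, $d\Pi=df\wedge\theta\wedge\Xi=d\log f\wedge\Pi$, so $\varphi=d\log f$ is closed and $d\varphi\equiv 0$ holds identically.

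For the sufficiency, the strategy is to produce a positive function $\lambda$ so that $\lambda\Pi$ is closed, then invoke the Poincaré lemma to write $\lambda\Pi=d\widetilde\Lambda$ and recover a Lagrangian via Lemma~\ref{lem2} after using Darboux's theorem to normalize the contact structure. The first key observation is that $d^2\Pi=0$ applied to $d\Pi=\varphi\wedge\Pi$ automatically yields $d\varphi\wedge\Pi=0$. In the elliptic coframe (\ref{equa1}) with $\Psi\equiv\eta^1\wedge\eta^4+\eta^2\wedge\eta^3\bmod\theta$, a direct computation shows that $\gamma\wedge\Pi=0$ forces $\gamma\in\mathrm{span}(\theta)$, and writing $d\varphi=A\wedge\theta+B\,d\theta+E\,\Psi+F$ (with $F$ in a complement of $\{d\theta,\Psi\}$), one checks that $d\varphi\wedge\Pi$ extracts precisely the coefficient of $\Psi$, hence the automatic identity forces $E=0$. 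Combined with the hypothesis $F=0$, this gives $d\varphi=A\wedge\theta+B\,d\theta$.

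Applying $d$ once more and reducing modulo $\theta$ produces $(dB-A)\wedge d\theta\equiv 0\bmod\theta$; the contact non-degeneracy $\theta\wedge d\theta\wedge d\theta\neq 0$ means $d\theta$ is a symplectic form on the contact hyperplane, so wedging with it is injective on 1-forms mod $\theta$, yielding $A\equiv dB\bmod\theta$. Consequently $\varphi':=\varphi-B\theta$ satisfies $d\varphi'=0$, so locally $\varphi'=d\log\lambda$ for some positive $\lambda$, and then $d(\lambda\Pi)=\lambda(d\log\lambda+\varphi)\wedge\Pi=\lambda B\,\theta\wedge\Pi=0$. The Poincaré lemma produces $\widetilde\Lambda$ with $d\widetilde\Lambda=\lambda\theta\wedge\Psi$; Darboux's theorem identifies $\mathcal M$ locally with $\mathcal J^1(\mathbb R^2,\mathbb R)$, and Lemma~\ref{lem2} expresses $\widetilde\Lambda=L\,dx^1\wedge dx^2+\theta\wedge(\cdot)+d\alpha$, exhibiting the system as Euler-Lagrange for $L$. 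The main obstacle is the structural calculation forcing $A\equiv dB\bmod\theta$, which rests crucially on the symplectic non-degeneracy of $d\theta|_{\ker\theta}$ and would fail without it.
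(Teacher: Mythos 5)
The paper does not prove this theorem itself; it simply cites \cite{BRP}, pp.~18--19, and your argument is essentially the one given there: use $d(d\Pi)=0$ and $d(d\varphi)=0$ together with the nondegeneracy of $d\theta$ on $\ker\theta$ to replace $\varphi$ by a closed representative $\varphi-B\theta$, integrate it to a conformal factor $\lambda$ making $\lambda\Pi$ closed, and then recover a Lagrangian via Lemma~\ref{lem2}. Two small points: your elimination of the $\Psi$-component of $d\varphi$ rests on $\theta\wedge\Psi\wedge\Psi\neq0$, so it is valid in the elliptic (and hyperbolic) case treated here but not for parabolic systems, and the sign in the final step should be $\varphi'=-d\log\lambda$ (equivalently $d\log\lambda=B\theta-\varphi$), so that $d\log\lambda+\varphi=B\theta$ indeed annihilates $\Pi$.
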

\begin{proof}
 See \cite{BRP} page 18-19.
\end{proof}
\begin{thm}\label{thm15}(Darboux)
 Given a 1-form $\theta$ as $\theta\wedge d\theta\neq0$. So it is written in the form
\[
 \theta=dz-p_1dx^1-p_2dx^2.
\]
\end{thm}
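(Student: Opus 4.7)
The plan is to reduce the statement to the symplectic Darboux theorem via the Reeb vector field. I read the hypothesis in context as the contact condition on a $5$-manifold, namely $\theta\wedge(d\theta)^2\neq 0$ (so that $d\theta$ has rank $4$ on $\ker\theta$); the condition $\theta\wedge d\theta\neq 0$ alone does not characterize the intended normal form in dimension $5$, so this appears to be a minor misprint in the statement.

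First I would define the Reeb vector field $R$ by the two conditions $\iota_R\theta=1$ and $\iota_R d\theta=0$; the non-degeneracy of $d\theta|_{\ker\theta}$ makes $R$ unique and non-vanishing. Using a flow-box construction, pick a $4$-dimensional hypersurface $N$ through the chosen point transverse to $R$ and push coordinates on $N$ forward by the flow of $R$ to obtain local coordinates $(z,y^1,y^2,y^3,y^4)$ in which $R=\partial/\partial z$. In these coordinates write $\theta=f\,dz+\sum_i g_i\,dy^i$; then $\iota_R\theta=1$ gives $f=1$, and $\iota_R d\theta=0$ gives $\partial g_i/\partial z=0$ for every $i$, so $\theta=dz+\beta$ with $\beta=g_i(y)\,dy^i$ independent of $z$.

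Next, the contact hypothesis becomes $dz\wedge(d\beta)^2\neq 0$, which forces $d\beta$ to be a symplectic form on the $4$-dimensional $y$-space. Applying the symplectic Darboux theorem produces coordinates $(x^1,x^2,p_1,p_2)$ on the $y$-slice in which $d\beta=dp_1\wedge dx^1+dp_2\wedge dx^2$. The $1$-form $\beta+p_1\,dx^1+p_2\,dx^2$ is then closed, hence by the Poincar\'e lemma equals $dh$ for some function $h(x,p)$. Setting $\tilde z:=z+h$ yields $\theta=d\tilde z-p_1\,dx^1-p_2\,dx^2$, which is the desired normal form.

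The main obstacle is the invocation of the symplectic Darboux theorem, which is itself a nontrivial result requiring either an inductive Pfaff-style argument or Moser's deformation trick; one would either cite it or carry its proof out inline. A secondary technicality is the careful index computation in the Reeb rectification step, ensuring that the full vanishing $\partial_z g_i=0$ for every $i$ really does follow from $\iota_R d\theta=0$ in the flow-box coordinates, and not just some weaker averaged statement.
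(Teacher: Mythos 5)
The paper offers no proof of this statement at all: it is labelled ``(Darboux)'' and invoked as a classical result, so there is no argument of the author's to compare yours against. Your proposal is the standard and correct route to the contact Darboux theorem: you rightly read the hypothesis as the $5$-dimensional contact condition $\theta\wedge d\theta\wedge d\theta\neq 0$ (which is indeed what the paper uses everywhere else), rectify the Reeb field to get $\theta=dz+\beta$ with $\beta$ independent of $z$ (the computation $\iota_R d\theta=(\partial_z g_i)\,dy^i$ in flow-box coordinates does give the full vanishing $\partial_z g_i=0$, so your ``secondary technicality'' is not an issue), observe that $d\beta$ is symplectic on the transversal, and finish with the symplectic Darboux theorem plus the Poincar\'e lemma. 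The only flaw is a sign slip at the end: with your normalization $d\beta=dp_1\wedge dx^1+dp_2\wedge dx^2$, the closed form is $\beta-p_1\,dx^1-p_2\,dx^2$, not $\beta+p_1\,dx^1+p_2\,dx^2$; writing $\beta=dh+p_i\,dx^i$ and then relabelling $p_i\mapsto -p_i$ (or choosing the normalization $d\beta=dx^i\wedge dp_i$ from the outset) yields $\theta=d\tilde z-p_1\,dx^1-p_2\,dx^2$ as desired. With that one-line correction your proof is complete, granting the symplectic Darboux theorem, which it is entirely reasonable to cite.
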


\section{Equivalence problem}

We apply the equivalence problem to classify the Monge-Ampère system $\varepsilon=(\mathcal{M},\theta,\Psi)$ satisfying
\[
 \left\{
\begin{array}{c}
\theta\wedge d\theta\wedge d\theta\neq0,\ \ \ \ \ \ \ \ \ \ \ \ \ \ \ \\
\Psi\wedge d\theta=0\text{ mod}(\theta),\ \ \ \ \ \ \ \ \ \ \ 
\end{array}
\right.
\]
by comparing it with another system $\tilde{\varepsilon}=(\tilde{\mathcal{M}}, \tilde{\theta},\tilde{\Psi})$, by looking at a diffeomorphism  $\varphi:\mathcal{M}\rightarrow\mathcal{\tilde{M}}$ such that 
\[
 \left\{
\begin{array}{c}
 \varphi^\ast\tilde{\theta}=\theta\ \ \\
\varphi^\ast\tilde{\Psi}=\Psi
\end{array}
\right.
\]

\subsection{Preliminaries} 

Let $\eta=\alpha\theta\neq0$, $\alpha\neq0$. Locally, by Darboux theorem, we can find 1-forms $\eta^0,\eta^1,\eta^2,\eta^3,\eta^4$ such that
\begin{equation}\label{e1}
 d\eta^0=\eta^1\wedge\eta^2+\eta^3\wedge\eta^4\ \ \text{ mod}(\theta),
\end{equation}
There exist functions $b_{\imath\jmath}$ such that $\Psi=\frac{1}{2}b_{\imath\jmath }\eta^\imath\wedge\eta^\jmath$, and as $\Psi\wedge d\eta^0=0$ mod$(\theta)$, then
\[
 b_{12}+b_{34}=0,
\]
We will study the conditions imposed by $\eta=(\eta^1,\eta^2,\eta^3,\eta^4)$ such that (\ref{e1}) be checked. So there are three non-zero orbits with we call: negative space, null space and positive space.
\begin{enumerate}
 \item  If $\Psi\wedge\Psi$ is a negative multiple of $d\eta^0\wedge d\eta^0$, then the local coframing $\eta$ may be chosen so that in addition of (\ref{e1}),
\[
 \Psi=\eta^1\wedge\eta^2-\eta^3\wedge\eta^4\ \ \text{mod}(\theta),
\]
for a classical variational problem, this occurs when the Euler-Lagrange PDE is hyperbolic.
\item  If $\Psi\wedge\Psi=0$, then $\eta$ may be chosen so that
\[
 \Psi=\eta^1\wedge\eta^3 \ \text{mod}(\theta),
\]
for a classical variational problem, this occurs when the Monge-Ampère PDE is parabolic.
\item  If $\Psi\wedge\Psi$ is a positive multiple of $d\eta^0\wedge d\eta^0$, then the local coframing $\eta$ may be chosen so that in addition of (\ref{e1}),
\[
 \Psi=\eta^1\wedge\eta^4-\eta^3\wedge\eta^2\ \ \text{mod}(\theta),
\]
for a classical variational problem, this occurs when the Monge-Ampère PDE is elliptic.
\end{enumerate}
In the following we will study the elliptic case, we look at the following conditions
\begin{equation}\label{equation50}
 \left\{
\begin{array}{c}
 \eta=\alpha\theta\neq0\ \ \ \ \ \ \ \ \ \ \ \ \ \ \ \ \ \ \ \ \ \ \ \ \ \ \ \\
d\eta^0=\eta^1\wedge\eta^2+\eta^3\wedge\eta^4\ \ \text{mod}(\theta),\\
\Psi=\eta^1\wedge\eta^4-\eta^3\wedge\eta^2\ \ \text{mod}(\theta),\ \
\end{array}
\right.
\end{equation}
\subsection{An algebra preliminary}
For $\omega:=(\omega^1,\omega^2,\omega^3,\omega^4)\in(\mathbb{R}^4)^\ast$, we consider the symmetric non-degenerate function 
\[
\langle.,.\rangle:\Lambda^2(\mathbb{R}^4)^\ast\times\Lambda^2(\mathbb{R}^4)^\ast\longrightarrow\ \ \ \ \mathbb{R},\ \ \ \ \ \ \ \ \ \ \ \ \ \ \ \ \ \ \ \ \ \ \ \ \ \ \ 
\]
\[
 \ \ \ \ \ \ \ \ \ \ \ \ \ \ \ \ \ \ \ \ \ \ \   ( \ \ \alpha\ \ \ \ ,\ \ \ \ \beta\ \ ) \ \longmapsto\ \  \langle\alpha,\beta\rangle:=\frac{\alpha\wedge\beta}{\omega^1\wedge\omega^2\wedge\omega^3\wedge\omega^4},
\]
The Lie algebra $SL(4,\mathbb{R})$ acts on $\Lambda^2(\mathbb{R}^4)^\ast\simeq\mathbb{R} ^6$ through the action $\forall$ $g\in SL(4,\mathbb{R})$
\[
q: \Lambda^2(\mathbb{R}^4)^\ast\ \ \longrightarrow\ \ \Lambda^2(\mathbb{R}^4)^\ast, \ \ \ \ \ \ \ \ \ \ \ \ \ \ \
\]
\[
 \alpha\ \ \ \ \longmapsto\ \ q(\alpha):= g^\ast\alpha,
\]
$q_g$ is a quadratic form in $g$ and we have
 \[
\langle q(\alpha),q(\beta)\rangle= \langle g^\ast\alpha,g^\ast\beta\rangle=\langle\alpha,\beta\rangle.
\]
 We want to represent $SL(4,\mathbb{R})=$Sp$(\mathbb{R}^{3,3}):=$Sp$(3,3)$\footnote{``Spin'' is a notation used by physicists. (Spin$(1,3)=SL(2,\mathbb{C}))$.} on $\mathbb{R}^6$. Let $G:=SO(\Lambda^2(\mathbb{R}^4)^\ast,\langle.,.\rangle)\subset GL(6,\mathbb{R})$. Denote
\[
 \Phi:\ \ SL(4,\mathbb{R})\ \ \ \longrightarrow\ \  G,\ \ \ \ \ \ \ \ \ \ \ \ \ \ \ \ \ \ \
\]
\[
 \alpha\ \ \ \ \ \longmapsto \ \ q(\alpha),
\]

 A basis $(\alpha^1_L,\alpha^2_L,\alpha^3_L,\alpha^1_R,\alpha^2_R,\alpha^3_R)$ of $\Lambda^2(\mathbb{R}^4)^\ast$ given by
\[
\left\{
 \begin{array}{c}
  \alpha^1_L=\omega^1\wedge\omega^2+\omega^3\wedge\omega^4,\\
\alpha^2_L=\omega^1\wedge\omega^3+\omega^4\wedge\omega^2,\\
\alpha^3_L=\omega^1\wedge\omega^4+\omega^2\wedge\omega^3,\\
\alpha^1_R=\omega^1\wedge\omega^2-\omega^3\wedge\omega^4,\\
\alpha^2_R=\omega^1\wedge\omega^3-\omega^4\wedge\omega^2,\\
\alpha^3_R=\omega^1\wedge\omega^4-\omega^2\wedge\omega^3,\\
 \end{array}
\right.
\]
We have $\forall a,b\in\{1,2,3\}$ 
\[
 \langle\alpha^a_L,\alpha^b_L\rangle=2\delta^a_b,\ \ \langle\alpha^a_R,\alpha^b_R\rangle=-2\delta^a_b \text{ and } \langle\alpha^a_L,\alpha^b_R\rangle=0,
\]
The signature of $\Phi$ is $(3,3)$ and $SO(\Lambda^2(\mathbb{R}^4)^\ast,\langle.,.\rangle)\subset SL(4,\mathbb{R})$ and since dim$SL(4,\mathbb{R})=$dim$SO(\Lambda^2(\mathbb{R}^4)^\ast,\langle.,.\rangle)=15$, we have
\[
 G=\text{Spin}(3,3),
\]
The first step of the equivalence problem method is to find a group $G_0$ and the associated $G_0-$structure $B_0$. Here $G_0$ preserves (\ref{equation50}), we define
\[
 G_0=G_{ellip}=\{g\in SL(4,\mathbb{R}), \ \ g^\ast\alpha^1_L=\alpha^1_L;\ \ g^\ast\alpha^3_L=\alpha^3_L\},
\]
For all $\xi$ in the Lie algebra $\frak{g}_{ellip}:=\frak{g}$, we have $\xi=(\xi)_{1\leq\imath,\jmath\leq4}\in M(4,\mathbb{R})$  tr$\xi=0$, and moreover, we have
\begin{equation}\label{equation31}
\left\{
\begin{array}{c}
 \xi\in M(4,\mathbb{R}), \ \ \text{tr}\xi=0,\\
\xi^\ast\alpha^1_L=\alpha^1_L;\ \ \xi^\ast\alpha^3_L=\alpha^3_L,
\end{array}
\right.
\end{equation}
Hence
\[
 \left\{
\begin{array}{c}
\xi^1_1+\xi^2_2+\xi^3_3+\xi^4_4=0,\ \ \ \ \ \ \ \ \ \ \ \ \ \ \ \ \ \ \ \ \ \ \ \ \ \ \ \ \ \ \ \ \ \ \ \ \ \ \ \ \ \ \ \ \ \ \ \ \ \ \ \ \ \ \ \ \ \ \ \ \ \ \\
 \xi^1_a\omega^a\wedge\omega^2+\xi^2_a\omega^1\wedge\omega^a+\xi^3_a\omega^a\wedge\omega^4+\xi^4_a\omega^3\wedge\omega^a=\omega^1\wedge\omega^2+\omega^3\wedge\omega^4,\\
\xi^1_a\omega^a\wedge\omega^4+\xi^4_a\omega^1\wedge\omega^a+\xi^2_a\omega^a\wedge\omega^3+\xi^3_a\omega^2\wedge\omega^a=\omega^1\wedge\omega^4+\omega^2\wedge\omega^3,\\
\end{array}
\right.
\]
Then there exist $a,b,c,d,e,f\in\mathbb{R}$ and a basis $\xi_1,\xi_2,\xi_3,\xi_4,\xi_5, \xi_6$ of $\frak{g}$ such as
\[
 \xi=a\xi_1+b\xi_2+c\xi_3+d\xi_4+e\xi_5+f\xi_6,
\]
with
\[
\xi_1=\left(\begin{array}{cccc}
             1&0&0&0\\
             0&-1&0&0\\
             0&0&1&0\\
             0&0&0&-1
            \end{array}
\right)\ \ \xi_2=\left(\begin{array}{cccc}
             0&0&0&0\\
             0&0&1&0\\
             0&0&0&0\\
             1&0&0&0
            \end{array}
\right)\ \ \xi_3=\left(\begin{array}{cccc}
             0&1&0&0\\
             0&0&0&0\\
             0&0&0&-1\\
             0&0&0&0
            \end{array}
\right)
\]
\[
\xi_4=\left(\begin{array}{cccc}
             0&0&1&0\\
             0&0&0&1\\
             -1&0&0&0\\
             0&-1&0&0
            \end{array}
\right)\ \ \xi_5=\left(\begin{array}{cccc}
             0&0&0&0\\
             -1&0&0&0\\
             0&0&0&0\\
             0&0&1&0
            \end{array}
\right)\ \ \xi_6=\left(\begin{array}{cccc}
            0&0&0&1\\
             0&0&0&0\\
             0&1&0&0\\
             0&0&0&0
            \end{array}
\right)
\]
Denote the $\mathbb{R}$-linear application $$\Phi:\ \ \frak{g}\ \ \longrightarrow \ \ sl(2,\mathbb{C}), \ \ \ \ \ \ \ \ \ \ \ \ \ \ \ \ \ \ \ \ \ \ \ \ \ $$
$$X,Y\longmapsto \Phi([X,Y])=[\Phi(X),\Phi(Y)].$$  
A basis of $sl(2,\mathbb{C})$ is $(h_0,e_0,f_0,h_1,e_1,f_1)$ such that
 $$[h_0,h_1]=[e_0,e_1]=[f_0,f_1]=0,\ \ [h_a,e_b]=-2i^{a+b}e_0,$$
$$ [h_a,f_b]=-2i^{a+b}f_0\text{ and }[e_a,f_b]=-i^{a+b}h_0,$$

We can find relations of the same type on the basis of $\frak{g}$.
\begin{table}[h!]
\begin{tabular}{|l|l|}
\hline
Structure constants of $\frak{g}$&Structure constants of $sl(2,\mathbb{C})$\\
\hline
 $[\xi_a,\xi_{3+a}]=0\ \ \text{ for } a=1,2,3$&$[h_0,h_1]=[e_0,e_1]=[f_0,f_1]=0$\\
\hline
$[\xi_1,\xi_2]=-2\xi_2,$&$[h_0,e_0]=-2e_0,$\\
\hline
$[\xi_1,\xi_3]=2\xi_3,$&$[h_0,f_1]=2f_1,$\\
\hline
$[\xi_1,\xi_5]=-2\xi_5,$&$[h_0,e_1]=-2e_1,$\\
\hline
$[\xi_1,\xi_6]=2\xi_6,$&$[h_0,f_0]=2f_0,$\\
\hline
$[\xi_4,\xi_2]=-2\xi_5,$&$[h_1,e_0]=-2e_1,$\\
\hline
$[\xi_4,\xi_6]=2\xi_3,$&$[h_1,f_0]=2f_1,$\\
\hline
$[\xi_4,\xi_5]=2\xi_2,$&$[h_1,e_1]=2e_0,$\\
\hline
$[\xi_4,\xi_3]=-2\xi_6,$&$[h_1,f_1]=-2f_0,$\\
\hline
$[\xi_2,\xi_6]=-\xi_1,$&$[e_0,f_0]=-h_0,$\\
\hline
$[\xi_2,\xi_3]=-\xi_4,$&$[e_0,f_1]=-h_1,$\\
\hline
$[\xi_5,\xi_6]=-\xi_4,$&$[e_1,f_0]=-h_1,$\\
\hline
$[\xi_5,\xi_3]=\xi_1,$&$[e_1,f_1]=h_0,$\\
\hline
\end{tabular}
\caption{ Comparison of the structure constants.}
\end{table}

We have this correspondence
$$\xi_1\longleftrightarrow h_0,$$
$$\xi_2\longleftrightarrow e_0,$$
$$\xi_3\longleftrightarrow f_1,$$
$$\xi_4\longleftrightarrow h_1,$$
$$\xi_5\longleftrightarrow e_1,$$
$$\xi_6\longleftrightarrow f_0,$$
Denote the linear map $$T:\ \ \ \mathbb{R}^4\ \ \ \longrightarrow\ \ \ \mathbb{C}^2\ \ \ \ \ \ \ \ \ \ $$
$$X=\left(\begin{array}{c}
           x^1\\
x^2\\
x^3\\
x^4
          \end{array}
\right)\longmapsto\left(\begin{array}{c}
           x^3+ix^1\\
x^2+ix^4\\
          \end{array}
\right),
$$
We can show that $\forall\ \ 1\leq\imath\leq 6$
$$T(\xi_\imath X)=\Phi(\xi_\imath)T(X),$$
\subsection{Back to the equivalence problem}

For $\omega=\left(\begin{array}{c}
\omega^0\\
\omega^1\\
\omega^2\\
\omega^3\\
\omega^4
          \end{array}
\right)$ and $\pi=\left(\begin{array}{c}
\pi^0\\
\pi^1\\
\pi^2\\
\bar{\pi}^1\\
\bar{\pi}^2
          \end{array}
\right)$ two vector valued 1-forms, $P\in M(5,\mathbb{C})$ we set $\omega=P\pi,$ where
\[
 \left\{
\begin{array}{c}
 \pi^0=\omega^0,\ \ \ \ \ \ \ \ \\
\pi^1=\omega^3+i\omega^1,\\
\pi^2=\omega^2+i\omega^4,\\
\bar{\pi}^1=\omega^3-i\omega^1,\\
\bar{\pi}^2=\omega^2-i\omega^4,\\
\end{array}
\right.
\]
Consider $\forall M_\natural=(a^\imath_\jmath)_{1\leq\imath,\jmath\leq4}\in\frak{g}$, we note $M\in M(5,\mathbb{C})$ by
$$M=\left(\begin{array}{cc}
                a^0_0&0\\
                 0&M_\natural
               \end{array}
\right),$$
Introducing the torsion $\tau$ given by $\tau=d\omega+\varphi\wedge\omega$, we obtain
\begin{equation}\label{equationb}
P^{-1}\tau=d\pi+\psi\wedge\pi,
\end{equation}
where $\psi=P^{-1}\varphi P$. Taking that $M_\natural=(a^\imath_\jmath) _{1\leq\imath,\jmath\leq4}\in\frak{g}$, thus
$$P^{-1}M P=\left(\begin{array}{ccccc}
              a^0_0&0&0&0&0\\
               0&a^1_1+ia^1_3&a^1_4+ia^1_2&0&0\\
               0&a^2_3-ia^2_1&a^2_2-ia^2_4&0&0\\
               0&0&0&a^3_3+ia^3_1&a^3_2+ia^3_4\\
                0&0&0&a^4_1-ia^4_3&a^4_4-ia^4_2\\
             \end{array}
\right),
$$ 
\begin{prop}\label{eqaution34}
 Let $(\mathcal{M}^5,\varepsilon)$ an elliptic Monge-Ampère system. The adapted coframings are the sections of $G_0-$structure on $\mathcal{M}$, where $G_0$ is the smallest subgroup generated by all matrices of size (1,2,2) of the form
\begin{equation}\label{equationa}
\left(\begin{array}{ccc}
              a^0_0&0&0\\
               C&A&0\\
               \bar{C}&0&\bar{A}\\
             \end{array}
\right),
\end{equation}
where $A\in$sl$(2,\mathbb{C})$ and det$A=a^0_0\neq0$.
\end{prop}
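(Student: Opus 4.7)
The plan is to identify $G_0$ as the stabilizer in $GL(5,\mathbb{R})$ of the structural data (\ref{equation50}), compute its Lie algebra using the identification $\mathfrak{g}_{ellip}\simeq sl(2,\mathbb{C})$ established in Section~3.2, and then read off the group elements after conjugating by the complexifying matrix~$P$.

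First I would note that an adapted coframing on $\mathcal{M}$ is any $\omega=(\omega^0,\ldots,\omega^4)$ satisfying (\ref{equation50}), and two such coframings differ by a matrix $M\in GL(5,\mathbb{R})$ with $\tilde\omega=M\omega$. Requiring $\tilde\omega$ to again satisfy (\ref{equation50}) forces $M$ to be block-lower-triangular of shape $(1,4)$: the top row is $(a^0_0,0,0,0,0)$ because $\omega^0$ generates the contact line; the $4\times 1$ column below it records the mod-$\theta$ ambiguity in $\omega^1,\ldots,\omega^4$; and the remaining $4\times 4$ block $M_\natural$ must preserve, modulo $\omega^0$, both $\alpha^1_L$ and $\alpha^3_L$, hence $M_\natural\in G_{ellip}$.

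Next I would translate the picture into complex coordinates via $\omega=P\pi$. The computation of Section~3.2 gives the explicit basis $\xi_1,\ldots,\xi_6$ of $\mathfrak{g}_{ellip}$ and the isomorphism $\Phi\colon\mathfrak{g}_{ellip}\to sl(2,\mathbb{C})$; the formula for $P^{-1}MP$ displayed just before the proposition then shows that in the $\pi$-coordinates the $4\times 4$ block splits as two $2\times 2$ blocks, the upper one being the matrix $A$ built from the entries of $M_\natural$ by the prescribed rule and the lower one its complex conjugate $\bar A$. The real mod-$\theta$ column likewise recombines into a complex column $C$ with conjugate $\bar C$. Exponentiating this infinitesimal description and multiplying out such elements recovers the block form displayed in (\ref{equationa}), and the ``smallest subgroup generated'' clause is then automatic because these matrices already form a group under multiplication.

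The main obstacle is to pin down the scaling constraint $\det A=a^0_0$. The clean way to do this is to compute directly from the definitions of $\pi^1,\pi^2$ that $\pi^1\wedge\pi^2\equiv -\Psi+i\,d\omega^0\pmod{\omega^0}$, so that a transformation $\pi^i\mapsto A^i_j\pi^j$ rescales the pair $(d\omega^0,\Psi)$ by the single complex factor $\det A$; simultaneously $\omega^0\mapsto a^0_0\omega^0$ rescales $d\omega^0$ by $a^0_0$, and compatibility of the two scalings with the normalization (\ref{equation50}) forces $\det A=a^0_0$. The nondegeneracy condition $a^0_0\neq 0$ is just the contact requirement $\omega^0\wedge d\omega^0\wedge d\omega^0\neq 0$, which completes the identification of $G_0$.
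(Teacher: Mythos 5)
Your argument is correct and follows the route the paper itself intends: the paper's proof is only the one-sentence assertion that adapted coframings have the form (\ref{equationa}), with the real work delegated to the Section~3.2 computation of $\mathfrak{g}_{ellip}$ and the conjugation $P^{-1}MP$, which is exactly the machinery you invoke. Your derivation of the constraint $\det A=a^0_0$ from the identity $\pi^1\wedge\pi^2\equiv-\Psi+i\,d\omega^0 \ (\mathrm{mod}\ \omega^0)$ is sound (independence of $\Psi$ and $d\omega^0$ forces $\det A$ to be real and equal to the scaling of $\omega^0$) and supplies a step the paper leaves entirely implicit.
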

\begin{proof}
 The sections of $G-$structure adapted (\ref{equation50}) are of the form (\ref{equationa}).
\end{proof}
According to this propostion we can pass to the second step ``Calculation of the structure equations'', consider
\begin{equation}\label{equation32}
\psi=\left(\begin{array}{ccccc}
              \psi^0_0&0&0&0&0\\
               \psi^1_0&\psi^1_1&\psi^1_2&0&0\\
               \psi^2_0&\psi^2_1&\psi^2_2&0&0\\
               \bar{\psi}^1_0&0&0&\bar{\psi}^1_1&\bar{\psi}^1_2\\
               \bar{\psi}^2_0&0&0&\bar{\psi}^2_1&\bar{\psi}^2_2\\
             \end{array}
\right),
\end{equation} 
where $\psi^1_1+\psi^2_2=\bar{\psi}^1_1+\bar{\psi}^2_2=\psi^0_0$ .\\
We assume that $P^{-1}\tau=\left(\begin{array}{c}
                                 \tau^0\\ 
                                 \tau^1\\
                                  \tau^2\\
                                  \bar{\tau}^1\\
                                  \bar{\tau}^2\\
                                \end{array}
\right)$, where
$$
 \tau^0:=d\pi^0+\psi^0_0\wedge\pi^0=\frac{i}{2}(\bar{\pi}^1\wedge\bar{\pi}^2-\pi^1\wedge\pi^2),
$$
and for $\imath=1,2,3,4,$
$$
\tau^\imath=T^\imath_{12}\pi^1\wedge\pi^2+T^\imath_{1\bar{1}}\pi^1\wedge\bar{\pi}^1+T^\imath_{1\bar{2}}\pi^1\wedge\bar{\pi}^2+T^\imath_{2\bar{1}}\pi^2\wedge\bar{\pi}^1+T^\imath_{2\bar{2}}\pi^2\wedge\bar{\pi}^2+T^\imath_{\bar{1}\bar{2}}\bar{\pi}^1\wedge\bar{\pi}^2
$$
$$
+T^\imath_{01}\pi^0\wedge\pi^1+T^\imath_{02}\pi^0\wedge\pi^2+T^\imath_{0\bar{1}}\pi^0\wedge\bar{\pi}^1+T^\imath_{0\bar{2}}\pi^0\wedge\bar{\pi}^2.\ \ \ \ \ \ \ \ \ \ \ \ \ \ \ \ \ \ \ \
$$
 This produces the structure equations
\begin{equation}\label{equation36}
 \left\{
\begin{array}{c}
d\pi^0= -\psi^0_0\wedge\pi^0+\frac{i}{2}(\bar{\pi}^1\wedge\bar{\pi}^2-\pi^1\wedge\pi^2),\ \ \ \\
d\pi^1=-\psi^1_0\wedge\pi^0-\psi^1_1\wedge\pi^1-\psi^1_2\wedge\pi^2+\tau^1, \\
d\pi^2=-\psi^2_0\wedge\pi^0-\psi^2_1\wedge\pi^1-\psi^2_2\wedge\pi^2+\tau^2,\\
d\bar{\pi}^1=-\bar{\psi}^1_0\wedge\pi^0-\bar{\psi}^1_1\wedge\bar{\pi}^1-\bar{\psi}^1_2\wedge\bar{\pi}^2+\bar{\tau}^1,\\
d\bar{\pi}^2=-\bar{\psi}^2_0\wedge\pi^0-\bar{\psi}^2_1\wedge\bar{\pi}^1-\bar{\psi}^2_2\wedge\bar{\pi}^2+\bar{\tau}^2,\\
\end{array}
\right.
\end{equation}

Now we go to the second step which allows us to absorb the maximum of torsion in (\ref{equation36}) respecting $\psi^1_1+\psi^2_2=\bar{\psi}^1_1+\bar{\psi}^2_2=\psi^0_0$. First, by change the form $\psi^\imath_0\leftarrow\psi ^\imath_0-T^\imath_{0*}\pi^*$ we can consider\footnote{$*,\star \in\{1,2,\bar{1},\bar{2}\}.$}
$$T^\imath_{0*}=0.$$ 
By a change of $\psi^1_2$ and $\psi^2_1$, we can write
\[
 T^1_{2\bar{1}}=T^1_{2\bar{2}}=T^1_{12}=T^2_{1\bar{1}}=T^2_{1\bar{2}}=T^2_{12}=0,
\]
Respecting $\psi^1_1+\psi^2_2=\bar{\psi}^1_1+\bar{\psi}^2_2=\psi^0_0$, we can write
\[
 T^1_{1\bar{1}}=T^2_{2\bar{1}}=V_1\text{ and } T^2_{2\bar{2}}=T^1_{1\bar{2}}=V_2,
\]
Thus (\ref{equation36}) becomes
\begin{equation}\label{equationc}
 \left\{
\begin{array}{c}
d\pi^0= -\psi^0_0\wedge\pi^0+\frac{i}{2}(\bar{\pi}^1\wedge\bar{\pi}^2-\pi^1\wedge\pi^2),\ \ \ \ \ \ \ \ \ \ \ \ \ \ \ \ \ \ \ \ \ \ \ \ \ \ \ \ \ \ \ \ \ \ \ \ \ \ \ \ \ \ \ \ \ \\
d\pi^1=-\psi^1_0\wedge\pi^0-\psi^1_1\wedge\pi^1-\psi^1_2\wedge\pi^2+V_1\pi^1\wedge\bar{\pi}^1+V_2\pi^1\wedge\bar{\pi}^2+U_1\bar{\pi}^1\wedge\bar{\pi}^2, \\
d\pi^2=-\psi^2_0\wedge\pi^0-\psi^2_1\wedge\pi^1-\psi^2_2\wedge\pi^2+V_1\pi^2\wedge\bar{\pi}^1+V_2\pi^2\wedge\bar{\pi}^2+U_2\bar{\pi}^1\wedge\bar{\pi}^2,\\
d\bar{\pi}^1=-\bar{\psi}^1_0\wedge\pi^0-\bar{\psi}^1_1\wedge\bar{\pi}^1-\bar{\psi}^1_2\wedge\bar{\pi}^2+\bar{V}_1\bar{\pi}^1\wedge\pi^1+\bar{V}_2\bar{\pi}^1\wedge\pi^2+\bar{U}_1\pi^1\wedge\pi^2,\\
d\bar{\pi}^2=-\bar{\psi}^2_0\wedge\pi^0-\bar{\psi}^2_1\wedge\bar{\pi}^1-\bar{\psi}^2_2\wedge\bar{\pi}^2+\bar{V}_1\bar{\pi}^2\wedge\pi^1+\bar{V}_2\bar{\pi}^2\wedge\pi^2+\bar{U}_2\pi^1\wedge\pi^2,\\
\end{array}
\right.
\end{equation}
Here $V_\imath$ and $U_\imath$ are the new coefficients of torsion which are expressed in terms of $T^\imath_{*\star}$.\\
After calculating $0\equiv d(d\pi^0)$, we have
$$U_1=-2\bar{V}_2,\ \ U_2=2\bar{V}_1.$$ 
We calculate $d(d\pi^1)\equiv0$ and $d(d\pi^2)\equiv0$, thus we have the relation mod $\{\pi^0,\pi^1,\pi^2\}$.

 \begin{equation}\label{equation39}
0\equiv d\left(
\begin{array}{c}
U_1\\
U_2
\end{array}
\right)+\frac{i}{2}\left(
\begin{array}{c}
\psi^1_0\\
\psi^2_0
\end{array}
\right)+\left(
\begin{array}{cc}
\psi^1_1&\psi^1_2\\
\psi^2_1&\psi^2_2
\end{array}
\right).\left(
\begin{array}{c}
U_1\\
U_2
\end{array}
\right)-\psi^0_0\left(
\begin{array}{c}
U_1\\
U_2
\end{array}
\right),
\end{equation}

Let $G_1$-structure $B_1\subset B_0$ in which $\tau^1=\tau^2=0$ and $\varphi^1_0$, $\varphi^2_0$ are semi-basic, we consider the projector $\Phi: B_0\rightarrow B_1$ such that, for $x\in B_0$ we associate $x.g_0$ is a submersion which respects fibers. Thus $G_1$ is a sub-group acting over $B_1$ generated by matrices of the form
\begin{equation}\label{equation42}
 g_1=\left(
\begin{array}{ccc}
 a&0&0\\
0&A&0\\
0&0&\bar{A}\\
\end{array}
\right),
\end{equation}
Denote by
\[
 \psi^\imath_0=P^\imath_0\pi^0+P^\imath_*\pi^*, \text{ and } \bar{\psi}^\imath_0=\bar{P}^\imath_0\pi^0+\bar{P}^\imath_*\pi^*
\]
So the structure equations read
\begin{equation}\label{equation41}
\left\{
\begin{array}{c}
d\pi^0= -\psi^0_0\wedge\pi^0+\frac{i}{2}(\bar{\pi}^1\wedge\bar{\pi}^2-\pi^1\wedge\pi^2), \\
d\pi^1=-\psi^1_1\wedge\pi^1-\psi^1_2\wedge\pi^2-P^1_*\pi^*\wedge\pi^0, \\
d\pi^2=-\psi^2_1\wedge\pi^1-\psi^2_2\wedge\pi^2+P^2_*\pi^*\wedge\pi^0,\\
\end{array}
\right.
\end{equation}
We absorb the torsion, respecting the condition $\psi^1_1+\psi^2_2=\psi^0_0$
\begin{equation}\label{equation23}
\left\{
\begin{array}{c}
d\pi^0= -\psi^0_0\wedge\pi^0+\frac{i}{2}(\bar{\pi}^1\wedge\bar{\pi}^2-\pi^1\wedge\pi^2),\ \ \ \ \ \ \ \ \ \ \ \  \ \ \ \ \ \ \ \ \ \ \ \ \  \ \ \ \ \ \ \ \\
d\pi^1=-\psi^1_1\wedge\pi^1-\psi^1_2\wedge\pi^2-P{\pi}^1\wedge\pi^0-P^1_{\bar{1}}\bar{\pi}^1\wedge\pi^0-P^1_{\bar{2}}\bar{\pi}^2\wedge\pi^0, \\
d\pi^2=-\psi^2_1\wedge\pi^1-\psi^2_2\wedge\pi^2-P{\pi}^2\wedge\pi^0-P^2_{\bar{1}}\bar{\pi}^2\wedge\pi^0-P^2_{\bar{2}}\bar{\pi}^2\wedge\pi^0,\\
\end{array}
\right.
\end{equation}
Respecting $\psi^1_1+\psi^2_2=\bar{\psi}^1_1+\bar{\psi}^2_2=\psi^0_0$, we  can show
\begin{equation}\label{ei}
P+\bar{P}=0.
\end{equation}
We have $$0=-d\psi^0_0\wedge\pi^0+\frac{i}{2}\psi^0_0\wedge\bar{\pi}^1\wedge\bar{\pi}^2-\frac{i}{2}\psi^0_0\wedge\pi^1\wedge\pi^2$$
$$
+\frac{i}{2}d\bar{\pi}^1\wedge\bar{\pi}^2-\frac{i}{2}\bar{\pi}^1\wedge d\bar{\pi}^2-\frac{i}{2}d\pi^1\wedge\pi^2+\frac{i}{2}\pi^1\wedge d\pi^2,$$
thus
$$2id\psi^0_0\wedge\pi^0=(2\bar{P}\bar{\pi}^1\wedge\bar{\pi}^2-2P\pi^1\wedge\pi^2-(P^2_{\bar{1}}+\bar{P}^2_{\bar{1}})\pi^1\wedge\bar{\pi}^1+(\bar{P}^1_1-P^2_{\bar{2}})\pi^1\wedge\bar{\pi}^2$$
$$+(P^1_{\bar{1}}-\bar{P}^2_{\bar{2}})\pi^2\wedge\bar{\pi}^1+(\bar{P}^1_{\bar{2}}-P^1_{\bar{2}})\pi^2\wedge\bar{\pi}^2)\wedge\pi^0,   \ \ \ \ \ \ \ \ \ $$
thus
$$P-\bar{P}=0,$$
for (\ref{ei}), then we have
$$P=0,$$
then (\ref{equation23}) reads
\begin{equation}\label{equation24}
\left\{
\begin{array}{c}
d\pi^0= -\psi^0_0\wedge\pi^0+\frac{i}{2}(\bar{\pi}^1\wedge\bar{\pi}^2-\pi^1\wedge\pi^2),\ \ \ \ \ \ \ \ \ \ \ \  \ \ \ \ \ \\
d\pi^1=-\psi^1_1\wedge\pi^1-\psi^1_2\wedge\pi^2-P^1_{\bar{1}}\bar{\pi}^1\wedge\pi^0-P^1_{\bar{2}}\bar{\pi}^2\wedge\pi^0, \\
d\pi^2=-\psi^2_1\wedge\pi^1-\psi^2_2\wedge\pi^2-P^2_{\bar{1}}\bar{\pi}^2\wedge\pi^0-P^2_{\bar{2}}\bar{\pi}^2\wedge\pi^0,\\
\end{array}
\right.
\end{equation}
in particular
$$
2id\psi^0_0=-(P^2_{\bar{1}}+\bar{P}^2_{\bar{1}})\pi^1\wedge\bar{\pi}^1+(\bar{P}^1_1-P^2_{\bar{2}})\pi^1\wedge\bar{\pi}^2$$
\begin{equation}\label{equa24}
+(P^1_{\bar{1}}-\bar{P}^2_{\bar{2}})\pi^2\wedge\bar{\pi}^1+(\bar{P}^1_{\bar{2}}-P^1_{\bar{2}})\pi^2\wedge\bar{\pi}^2.
\end{equation}
 We define a pair of $2\times2$ matrix-valued functions on $B_1$ by
$$
S_1=\left(
\begin{array}{cc}
P^1_{\bar{1}}+\bar{P}^2_{\bar{2}}&\bar{P}^1_{\bar{2}}+P^1_{\bar{2}}\\
P^2_{\bar{1}}-\bar{P}^2_{\bar{1}}&\bar{P}^1_1+P^2_{\bar{2}}
\end{array}
\right), \ \
S_2=\left(
\begin{array}{cc}
P^1_{\bar{1}}-\bar{P}^2_{\bar{2}}&\bar{P}^1_{\bar{2}}-P^1_{\bar{2}}\\
P^2_{\bar{1}}+\bar{P}^2_{\bar{1}}&\bar{P}^1_1-P^2_{\bar{2}}
\end{array}
\right).
$$

\begin{thm}
 An elliptic Monge-Ampère system $(\mathcal{M},\varepsilon)$ satisfies $S_1=S_2=0$ if and only if it is locally equivalent to the Monge-Ampère system for the linear homogeneous Laplace equations.
\end{thm}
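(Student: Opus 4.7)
The plan is to continue the Cartan equivalence reductions one step beyond equation (\ref{equation24}) and then invoke the standard equivalence theorem for canonical coframings. For the converse direction, observe that taking sums and differences of corresponding entries of $S_1$ and $S_2$ shows that $S_1=S_2=0$ is equivalent to the separate vanishing of each of the eight torsion coefficients $P^1_{\bar 1}$, $P^1_{\bar 2}$, $P^2_{\bar 1}$, $P^2_{\bar 2}$, $\bar P^1_1$, $\bar P^1_{\bar 2}$, $\bar P^2_{\bar 1}$, $\bar P^2_{\bar 2}$. Substituting these vanishings into (\ref{equation24}) collapses the structure equations to the flat form
\begin{equation*}
d\pi^0=-\psi^0_0\wedge\pi^0+\tfrac{i}{2}(\bar\pi^1\wedge\bar\pi^2-\pi^1\wedge\pi^2),\quad d\pi^1=-\psi^1_1\wedge\pi^1-\psi^1_2\wedge\pi^2,\quad d\pi^2=-\psi^2_1\wedge\pi^1-\psi^2_2\wedge\pi^2,
\end{equation*}
while (\ref{equa24}) further forces $d\psi^0_0\equiv 0$ modulo $\{\pi^0,\pi^1,\pi^2,\bar\pi^1,\bar\pi^2\}$.

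From here I would prolong once more: applying $d^2=0$ to the simplified equations yields constraints that, together with the trace-free normalization $\psi^1_1+\psi^2_2=\psi^0_0$ inherited from the presentation of $G_0$ as a real form of $SL(2,\mathbb{C})$ via the map $\Phi$, determine $d\psi^a_b$ uniquely in terms of wedges of the existing forms. The result is a closed canonical coframing on a suitable principal bundle, whose structure equations depend on no free parameter. I would then write down the canonical coframing for the Laplace model on $\mathcal{J}^1(\mathbb{R}^2,\mathbb{R})$, taking $\theta=dz-p_1 dx^1-p_2 dx^2$ and the $\Psi$ for which the Monge-Amp\`ere equation is $u_{x^1 x^1}+u_{x^2 x^2}=0$, and verify by direct computation that the associated complex coframing with $\pi^1\sim dp_1+i\,dx^1$, $\pi^2\sim dx^2+i\,dp_2$ satisfies exactly the same closed equations. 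The local diffeomorphism identifying the two Monge-Amp\`ere systems is then produced in the standard way by applying the Frobenius theorem to the ideal on $\mathcal{M}\times\tilde{\mathcal{M}}$ generated by the componentwise differences of the two coframings.

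The forward implication is then immediate: for the Laplace model the explicit coframing above visibly yields flat equations, so $S_1=S_2=0$; since $S_1$ and $S_2$ are well-defined invariants of the $G_1$-structure $B_1$, any Monge-Amp\`ere system locally equivalent to the Laplace model inherits this vanishing. The delicate step is the prolongation and the verification that the equations for the residual forms $\psi^a_b$ close without introducing new invariants; this amounts to a careful $d^2=0$ computation using the complex presentation of $\mathfrak{g}_{\mathrm{ellip}}$ and the structure constants tabulated above. Everything else is routine bookkeeping.
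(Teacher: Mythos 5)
Your observation that $S_1=S_2=0$ is equivalent to the separate vanishing of the eight coefficients $P^1_{\bar{1}},P^1_{\bar{2}},P^2_{\bar{1}},P^2_{\bar{2}},\bar{P}^1_1,\bar{P}^1_{\bar{2}},\bar{P}^2_{\bar{1}},\bar{P}^2_{\bar{2}}$ is correct, and your forward direction (the Laplace model visibly has vanishing torsion, and $S_1,S_2$ are invariants of the $G_1$-structure $B_1$) is sound. But your converse takes a genuinely different route from the paper, and as written it has a gap at its central step. You propose to prolong the reduced structure equations and invoke the standard equivalence theorem for closed canonical coframings, asserting that the $d^2=0$ computation determines the $d\psi^a_b$ uniquely and that the prolonged system ``closes without introducing new invariants.'' That assertion is precisely the nontrivial content of the theorem: the paper's own computation in the last subsection shows that the system (\ref{equation24}) fails Cartan's test ($s'_1+2s'_2+3s'_3=5>r^{(1)}=4$), so the structure is not involutive, the connection forms $\psi^1_1,\psi^1_2,\psi^2_1$ are not determined by absorption alone, and a prolongation can in principle produce new invariants beyond $S_1,S_2$. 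Until you actually exhibit the prolonged structure equations and verify that all their torsion is constant when $S_1=S_2=0$, the Frobenius argument on $\mathcal{M}\times\tilde{\mathcal{M}}$ has nothing to apply to; calling this ``routine bookkeeping'' defers the entire proof.

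The paper avoids the prolongation by integrating directly. With all the $P$'s zero, (\ref{equa24}) gives $d\psi^0_0=0$, hence locally $\psi^0_0=\lambda^{-1}d\lambda$ for some $\lambda>0$; the relation $d(\pi^1\wedge\pi^2)=-\psi^0_0\wedge\pi^1\wedge\pi^2$ then shows that the decomposable real $2$-forms $\lambda\omega^1\wedge\omega^2$, $\lambda\omega^3\wedge\omega^4$, $\lambda\omega^1\wedge\omega^4$, $\lambda\omega^3\wedge\omega^2$ are closed, so a Darboux-type argument writes them as $-dp\wedge dx$, $-dq\wedge dy$, $-dp\wedge dy$, $-dq\wedge dx$; finally $d(\lambda\omega^0)=-dp\wedge dx-dq\wedge dy$ and the Poincar\'e lemma give $\lambda\omega^0=dz-p\,dx-q\,dy$, producing explicit coordinates in which $\varepsilon$ is exactly the Laplace system $\{dz-p\,dx-q\,dy,\ -dp\wedge dx-dq\wedge dy,\ -dp\wedge dy+dq\wedge dx\}$. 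This constructive integration is what your sketch is missing; either carry out the full prolongation you allude to, or replace that step by this direct construction of coordinates.
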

\begin{proof}
 If $S_2=0$, then $\psi^0_0$ is closed, from  (\ref{equa24}), $S_2=0$ if and only if for some 1-form $\alpha$ we have
$$
d\psi^0_0=\alpha\wedge\pi^0.
$$
But $dd=0$, hence $0\equiv-\alpha\wedge d\pi^0$, which gives us
$$\alpha\equiv0\ \ \text{mo}\{ \pi^0 \}$$
Conversely, if $d\psi^0_0=0$, then $S_2=0$.
In case $S_1=S_2=0$, then $d\psi^0_0=0$, thus we can locally find a function $\lambda>0$ such that
$$\psi^0_0=\lambda^{-1}d\lambda$$
In case $S_1=S_2=0$ we can find
$$d(\pi^1\wedge\pi^2)=-\psi^0_0\pi^1\wedge\pi^2$$
hence, we can write
$$d(\lambda\omega^1\wedge\omega^4)=d(\lambda\omega^3\wedge\omega^2)=d(\lambda\omega^1\wedge\omega^2)=d(\lambda\omega^3\wedge\omega^4)=0$$
Then locally by (\ref{thm15}) there exist a functions $x,y,p$ and $q$ such that
$$-dp\wedge dx=\lambda\omega^1\wedge\omega^2$$
$$-dq\wedge dy=\lambda\omega^3\wedge\omega^4$$
$$-dp\wedge dy=\lambda\omega^1\wedge\omega^4$$
$$-dq\wedge dx=\lambda\omega^3\wedge\omega^2$$
Not that
$$d(\lambda\pi^0)=d(\lambda\omega^0)=\frac{i}{2}(\bar{\pi}^1\wedge\bar{\pi}^2-\pi^1\wedge\pi^2)=\lambda(\omega^1\wedge\omega^2+\omega^3\wedge\omega^4)=-dp\wedge dx-dq\wedge dy$$
By Poincaré lemma, locally there is exist a function $z$, such that
$$\lambda\omega^0=dz-pdx-qdy$$
Then, in local coordinates, our elliptic Monge-Ampère system is
$$
\varepsilon=\{\omega^0,\omega^1\wedge\omega^2+\omega^3\wedge\omega^4,\omega^1\wedge\omega^4-\omega^3\wedge\omega^2\}\ \ \ \ \ \ \ \ \ \ \ \  \ \ \ \ \ \ \ \ \ \ \
$$
$$
=\{dz-pdx-qdy,-dp\wedge dx-dq\wedge dy,-dp\wedge dy+dq\wedge dx\}.
$$
\end{proof}

It's natural to ask about the situation in wich $S_2=0$, but possibly $S_1\neq0$.

\begin{thm}
 An elliptic Monge-Ampère system $(\mathcal{M},\varepsilon)$ satisfies $S_2=0$ if and only if it is locally equivalent to an Euler-Lagrange system.
\end{thm}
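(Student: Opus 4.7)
The key tool is Theorem~\ref{thm1}, which characterizes local equivalence to an Euler-Lagrange system by the existence of a $1$-form $\varphi$ with $d\Pi=\varphi\wedge\Pi$ and $d\varphi\equiv 0$ modulo $\{\theta,d\theta,\Psi\}$, where $\Pi=\theta\wedge\Psi$. The plan is to pin down such a $\varphi$ explicitly in the adapted complex coframing and recast the closure condition as a constraint on the torsion functions $P^\imath_\jmath$ appearing in~(\ref{equation24}).

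The first step is to compute $d\Pi$ using the structure equations. Since $\Pi = \pi^0\wedge\Psi$ with $\Psi \equiv \frac{1}{2}(\pi^1\wedge\bar{\pi}^2+\bar{\pi}^1\wedge\pi^2)$ modulo $\pi^0$, the semi-basic part $\frac{i}{2}(\bar{\pi}^1\wedge\bar{\pi}^2-\pi^1\wedge\pi^2)$ of $d\pi^0$ wedges to zero with $\Psi$ on the $4$-dimensional horizontal distribution (the candidate $4$-forms each carry a repeated factor), hence $d\pi^0\wedge\Psi=-\psi^0_0\wedge\Pi$ and $d\Pi = -\psi^0_0\wedge\Pi - \pi^0\wedge d\Psi$. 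Because $\Psi$ restricts to a symplectic form on the horizontal distribution, the map $\alpha\mapsto\alpha\wedge\Pi$ on $1$-forms has kernel exactly $\mathbb{R}\,\pi^0$; consequently $\varphi\wedge\Pi=d\Pi$ admits a unique solution modulo $\pi^0$, of the form $\varphi = -\psi^0_0 + \beta$, where the correction $\beta$ (determined modulo $\pi^0$ by $\beta\wedge\Psi\equiv d\Psi\bmod\pi^0$) absorbs the $-\pi^0\wedge d\Psi$ piece. Adding a multiple of $\pi^0$ to $\varphi$ changes $d\varphi$ only by terms inside $\{\pi^0,d\pi^0\}$, so the Euler-Lagrange condition reduces to $d(-\psi^0_0+\beta)\equiv 0$ modulo $\{\pi^0,d\pi^0,\Psi\}$.

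The final step is linear algebra on the $6$-dimensional space of horizontal $2$-forms. The span of $d\pi^0,\Psi$ modulo $\pi^0$ is $2$-dimensional and lies in the directions $\pi^1\wedge\pi^2,\bar{\pi}^1\wedge\bar{\pi}^2,\pi^1\wedge\bar{\pi}^2,\pi^2\wedge\bar{\pi}^1$; the complementary directions $\pi^1\wedge\bar{\pi}^1,\pi^2\wedge\bar{\pi}^2$ represent the obstruction. Formula~(\ref{equa24}) expresses $d\psi^0_0$ mod $\pi^0$ precisely in the four ``mixed'' directions, with coefficients that are (up to sign) the four entries of $S_2$. Matching these coefficients against the decomposition of $d\beta$, and invoking the reality constraint $\overline{\psi^0_0}=\psi^0_0$, identifies the Euler-Lagrange condition with $S_2=0$. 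For the converse, $S_2=0$ yields $d\psi^0_0\equiv 0$ mod $\pi^0$, and the $d^2=0$ argument from the proof of the preceding theorem then places $d\psi^0_0$ in the full ideal, so Theorem~\ref{thm1} applies. The main obstacle will be the honest computation of $d\beta$: a naive identification $\varphi=-\psi^0_0$ only produces three scalar conditions (the two off-diagonal entries of $S_2$, plus the trace $S_2(1,1)+S_2(2,2)$); separating the two diagonal entries individually requires the $\beta$ correction together with the reality structure, and verifying that these together produce exactly the vanishing of all four entries of $S_2$ is the technical heart of the proof.
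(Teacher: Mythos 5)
Your overall strategy --- reduce the criterion of Theorem~\ref{thm1} to the condition $d\psi^0_0\equiv 0$ modulo $\{\pi^0,d\pi^0,\Psi\}$ and then read off $S_2=0$ from (\ref{equa24}) --- is the right one, and it is essentially the paper's (the paper phrases it via a closed Poincar\'e--Cartan form $\lambda\pi^0\wedge\Psi$, obtaining $d(\log\lambda)-2\psi^0_0=\mu\omega^0$ and hence $d\psi^0_0\equiv -\tfrac{\mu}{2}\,d\omega^0$ mod $\omega^0$). But your execution has a genuine error at the decisive point: your formula for $\Psi$ in the complex coframe is wrong. With $\pi^1=\omega^3+i\omega^1$ and $\pi^2=\omega^2+i\omega^4$ one computes $\pi^1\wedge\pi^2\equiv -\Psi+i\,d\omega^0$ mod $\omega^0$, hence $\Psi\equiv -\tfrac12\left(\pi^1\wedge\pi^2+\bar{\pi}^1\wedge\bar{\pi}^2\right)$, \emph{not} $\tfrac12\left(\pi^1\wedge\bar{\pi}^2+\bar{\pi}^1\wedge\pi^2\right)$. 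Consequently $d\pi^0$ and $\Psi$ together span exactly the two ``unmixed'' directions $\pi^1\wedge\pi^2,\ \bar{\pi}^1\wedge\bar{\pi}^2$, and the obstruction space is the full four-dimensional space of mixed $2$-forms $\pi^\imath\wedge\bar{\pi}^\jmath$ --- which is precisely where (\ref{equa24}) places $d\psi^0_0$, with coefficients the four entries of $S_2$ up to sign. All four conditions therefore drop out simultaneously; the ``only three scalar conditions'' difficulty you describe, and the delicate analysis of $d\beta$ that you defer as the technical heart of the proof, are artifacts of the miscomputed $\Psi$ and do not arise.

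There is a second, related gap: you never actually compute the correction $\beta$, yet it is what your argument hinges on. In fact no separate correction is needed. Using the structure equations (\ref{equation24}) and the trace condition $\psi^1_1+\psi^2_2=\bar{\psi}^1_1+\bar{\psi}^2_2=\psi^0_0$, one finds $d\Psi\equiv -\psi^0_0\wedge\Psi$ mod $\pi^0$, so that $d\Pi=-2\psi^0_0\wedge\Pi$ modulo terms absorbed into $\pi^0$; your $\beta$ is just $-\psi^0_0$ again, and $\varphi\equiv -2\psi^0_0$ mod $\pi^0$. With that in hand the equivalence with $S_2=0$ is an immediate comparison of (\ref{equa24}) against the span of $d\pi^0$ and $\Psi$, and the converse follows from the $d^2=0$ argument you cite. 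As written, however, the proposal both misidentifies $\Psi$ and leaves the key verification unproven, so it does not yet constitute a proof.
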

\begin{proof}
 The condition for $\varepsilon$ to contain a Poincaré-Cartan form 
 $$
 \Pi=\frac{1}{2}\lambda\pi^0\wedge(\bar{\pi}^1\wedge\bar{\pi}^2+\pi^1\wedge\pi^2).
 $$
 $$
 =\lambda\omega^0\wedge(\omega^1\wedge\omega^4-\omega^3\wedge\omega^2).
 $$
 We can assume that $\Pi$ to be closed for some $\lambda>0$ on $B_1$. By differentiating then
 $$
 0=(d\lambda-2\lambda\psi^0_0)\wedge\omega^0\wedge(\omega^1\wedge\omega^4-\omega^3\wedge\omega^2).
 $$
 Exterior algebra, for some function $\mu$, say
 $$
 d\lambda-2\lambda\psi^0_0=\mu\lambda\omega^0.
 $$
 In other words,
 $$
 d(\log\lambda)-2\psi^0_0=\mu\omega^0.
 $$
 Hence
 $$
 d\psi^0_0\equiv0\text{ mod }\{\omega^0\}.
 $$
But we know that
 $$
 d\omega^0=\omega^1\wedge\omega^2+\omega^3\wedge\omega^4.
 $$
  (\ref{equa24}), gives us $S_2=0$.
 
\end{proof}

\subsection{Remark in Cartan's test}

\begin{defi}
 If $(\pi^0,\pi^1,\pi^2)$ be a lifted coframe, then the associated Exterior Differential System, with equivalence condition $\pi^0\wedge\pi^1\wedge\pi^2\wedge\bar{\pi}^1\wedge\bar{\pi}^2\neq0$, it is involutive if and only if it satisfies the Cartan's test.
\end{defi}
To apply equivalence method to some problem there are several steps, one important is Cartan's test. If the problem is involutive we can conclude, if this is not the case,
it is necessary to extend the system to continue. We begin, for example, to test the involution in the elliptic case. To find this there is a process to follow 
\cite{Olver}, in (\ref{equation24}), we have $r=5$ and $n=3$; to find the reduced characters of Cartan, replace in (\ref{equation24}) $\psi^\imath_\jmath$ by
$z^\imath_{\jmath0}\pi^0+z^\imath_{\jmath1}\pi^1+ z^\imath_{\jmath2}\pi^2$, we can show
\begin{equation}\label{equation25}
\left\{
\begin{array}{c}
 z^0_{0\jmath}=0 \ \ \ \jmath=0,...,2,\ \ \ \  \\
z^1_{10}=0, \ \ z^1_{11}= z^2_{12},\ \ \ \ \ \\
z^1_{20}=0, \ \ z^1_{12}= z^1_{21}, z^1_{22}, \\
z^2_{10}=0, \ \ z^2_{11},\ \ \ \ \ \ \ \ \ \ \ \ \ \\
\end{array}
\right.
\end{equation}

The four parameters $z^1_{11},z^1_{12},z^1_{22},z^2_{11}$ can be chosen arbitrarily, thus the degree of indeterminancy $r^{(1)}$ of a lifted coframe is the number 
of free variables in the solution to the associaled linear absorption system
\[
 r^{(1)}=4.
\]
Let be $X=(x^0,...,x^2)\in\mathbb{R}^3$ and the matrix $M$ of size $3\times 4$ define by
\[
 M(X):=M^{\imath l}_k(X):=\sum_{\jmath=0}^2A^{\imath l}_{\jmath k}x^\jmath,\ \ \imath=0,...,2, \ \ (^l_k)\in(^0_0,^1_1,^1_2,^2_1),
\]
where $A^{\imath l}_{\jmath k}$ are a coefficients define in (\ref{equation24}). In other words
\[
 M(X)=\left(A^{\imath l}_{0k}(x^0)+A^{\imath l}_{1k}(x^1)+A^{\imath l}_{2k}(x^2)\right)_{\substack{0\leq\imath\leq2 \\ (^l_k)\in(^0_0,^1_1,^1_2,^2_1)}},
 \]
  Thus 
 \[
 M(X)=\left(
\begin{array}{ccccccc}
 -x^0&0&0&0&\\
0&-x^1&-x^2&0\\
-x^2&x^2&0&-x^1\\
\end{array}
\right).
\]
For $X=(-1,-1,0)$, thus
\[
 M=\left(
\begin{array}{ccccccc}
 1&0&0&0\\
0&1&0&0\\
0&0&0&1\\
\end{array}
\right).
\]
 If we denote by $s'_1,...,s'_3$ the reduced characters Cartan then
\[
 s'_1=3.
\]
Now, for $X=(x^0,...,x^2)$ and $Y=(y^0,...,y^2)$, we have
\[
\left(
\begin{array}{c}
 M(X)\\
 M(Y)
 \end{array}
 \right)
 =\left(
\begin{array}{ccccccc}
 -x^0&0&0&0\\
0&-x^1&-x^2&0\\
-x^2&x^2&0&-x^1\\
-y^0&0&0&0\\
0&-y^1&-y^2&0\\
-y^2&y^2&0&-y^1\\
\end{array}
\right).
\]
For $X=(-1,-1,0)$ and $Y=(0,0,-1)$, we have $s'_1+s'_2=4$, thus 
\[
s'_2=1.
\]
Or we have $s'_1+s'_2+s'_3=r=4$, then
\[
s'_3=0,
\]
thus we have
\[
s'_1+2s'_2+3s'_3=5> r^{(1)}=4.
\]
Hence the system (\ref{equation24}) is not satisfies Cartan's test, thus it's necessary to extend the system to continue. Note that before the step of normalizing, the system (\ref{eqaution34}) satisfied Cartan's test, I gave a proof of this in my thesis \cite{Imsatfia}. This leads to further investigations.

\textbf{Acknowledgement} I am grateful to Frédéric Hélein who suggested to me this problem as a part of my Ph.D.

\nocite{*}
\bibliographystyle{plain}
\bibliography{article3}

\begin{thebibliography}{1}

\bibitem{Aubin2001}
Thierry Aubin.
\newblock {\em A course in differential geometry}, volume~27 of {\em Graduate
  Studies in Mathematics}.
\newblock American Mathematical Society, Providence, RI, 2001.

\bibitem{BRP}
Robert Bryant, Phillip Griffiths, and Daniel Grossman.
\newblock {\em Exterior differential systems and {E}uler-{L}agrange partial
  differential equations}.
\newblock Chicago Lectures in Mathematics. University of Chicago Press,
  Chicago, IL, 2003.

\bibitem{Helein2004}
Fr{\'e}d{\'e}ric H{\'e}lein.
\newblock Hamiltonian formalisms for multidimensional calculus of variations
  and perturbation theory.
\newblock In {\em Non compact problems at the intersection of geometry,
  analysis, and topology}, volume 350 of {\em Contemp. Math.}, pages 127--147.
  Amer. Math. Soc., Providence, RI, 2004.

\bibitem{Thomas}
Thomas~A. Ivey and J.~M. Landsberg.
\newblock {\em Cartan for beginners: differential geometry via moving frames
  and exterior differential systems}, volume~61 of {\em Graduate Studies in
  Mathematics}.
\newblock American Mathematical Society, Providence, RI, 2003.

\bibitem{Imsatfia}
Imsatfia Moheddine.
\newblock {\em Géométrie de Cartan fondée sur la notion d'aire et application
  du problème d'équivalence}.
\newblock PhD thesis, 2012.

\bibitem{Motimoto79}
Tohru Morimoto.
\newblock La géométrie des équations de {M}onge-{A}mpère.
\newblock {\em C. R. Acad. Sci. Paris Sér. A-B}, 289(1):A25--A28, 1979.

\bibitem{Olver}
Peter~J. Olver.
\newblock {\em Equivalence, invariants, and symmetry}.
\newblock Cambridge University Press, Cambridge, 1995.

\bibitem{Spivak79}
Michael Spivak.
\newblock {\em A comprehensive introduction to differential geometry. {V}ol.
  {(I-II-IV)}}.
\newblock Publish or Perish Inc., Wilmington, Del., second edition, 1979.

\bibitem{Neut}
Neut Sylvain.
\newblock {\em Implantation et nouvelles applications de la méthode
  d'équivalence de Cartan}.
\newblock PhD thesis, 2003.

\end{thebibliography}

\end{document}